
\documentclass[11pt,oneside,reqno]{amsart}
\usepackage{graphicx}
\numberwithin{equation}{section}

\textwidth 153mm \textheight 235mm \topmargin -4mm \oddsidemargin -4mm \evensidemargin -4mm 

\newcommand{\bea}{\begin{eqnarray}}
\newcommand{\eea}{\end{eqnarray}}
\newcommand{\ba}{\begin{array}}
\newcommand{\ea}{\end{array}}
\newcommand{\edc}{\end{document}}
\newcommand{\bc}{\begin{center}}
\newcommand{\ec}{\end{center}}
\newcommand{\be}{\begin{equation}}
\newcommand{\ee}{\end{equation}}




\def\bc{{\mathbb C}}


\def\s{\sigma}

\def\h{{\mathbf{h}}}

\newtheorem{thm}{Theorem}[section]

\newtheorem{prop}[thm]{Proposition}

\theoremstyle{remark}
\newtheorem{rem}{Remark}[section]

\date{\today}
\begin{document}
\title[Gibbs Measures on an arbitrary order Cayley tree]
{Gibbs Measures with memory of length 2 on an arbitrary order Cayley tree}
\author{Hasan Ak\i n}
\address{Hasan Ak\i n\\ Ceyhun Atuf Kansu Cad. Cankaya, Ankara, Turkey,
akinhasan25@gmail.com 
}

\date{\today }%

\begin{abstract}
In this paper, we consider the Ising-Vanniminus model on an
arbitrary order Cayley tree. We generalize the results conjectured
in \cite{Akin2016,Akin2017} for an arbitrary order Cayley tree.
We establish existence and a full classification of translation
invariant Gibbs measures with memory of length 2 associated with
the model on arbitrary order Cayley tree. We construct the
recurrence equations corresponding generalized ANNNI model. We
satisfy the Kolmogorov \emph{consistency} condition. We propose a
rigorous measure-theoretical approach to investigate the Gibbs
measures with memory of length 2 for the model. We explain whether
the number of  branches of tree does not change the number of
Gibbs measures. Also we take up with trying to determine when
phase transition does occur.
\\

\textbf{Keywords}: Solvable lattice models, Rigorous results in statistical mechanics,
Gibbs measures,  Ising-Vannimenus model, phase transition.\\
\textbf{PACS}: 05.70.Fh;  05.70.Ce; 75.10.Hk.
\end{abstract}
\maketitle
\section{Introduction}
One of the main purposes of equilibrium statistical mechanics
consists in describing all limit Gibbs distributions corresponding
to a given Hamiltonian \cite{Georgii}. One of the methods used for
the description of Gibbs measures on Cayley trees is Markov random
field theory and recurrent equations of this theory
\cite{AT1,NHSS,GRS,RAU,Rozikov,MDA,AGUT}. The approach we use here
is based on the theory of Markov random fields on trees and
recurrent equations of this theory. In this paper, we discuss
their relation with the recurrent equations of the theory of
Markov random fields on trees for Ising model \cite{NHSS,NHSS1}.
In \cite{ART}, we obtain a new set of limiting Gibbs measures for
the Ising model on a Cayley tree. In \cite{GU2011,UGAT,GATUN}, the
authors study the phase diagram for the Ising model on a Cayley
tree of arbitrary order $k$ with competing interactions.  In
\cite{GATUN}, the authors characterized each phase by a particular
attractor and the obtained the phase diagram  by following the
evolution and detecting the qualitative changements of these
attractors.

$n$-dimensional integer lattice, denoted $\mathbf{Z}^{n}$, has
so-called amenability property. Moreover, analytical solutions
does not exist on such lattice. But investigations of phase
transitions of spin models on hierarchical lattices showed that
there are exact calculations of various physical quantities (see
for example, \cite{Bax,Rozikov}). Such studies on the hierarchical
lattices begun with the development of the Migdal-Kadanoff
renormalization group method where the lattices emerged as
approximants of the ordinary crystal ones. On the other hand, the
study of exactly solved models deserves some general interest in
statistical mechanics \cite{Zachary}.

A Cayley tree is the simplest hierarchical lattice with
non-amenable graph structure \cite{Preston}. Also, Cayley trees
still play an important role as prototypes of graphs
\cite{Ostilli}. This means that the ratio of the number of
boundary sites to the number of interior sites of the Cayley tree
tends to a nonzero constant in the thermodynamic limit of a large
system. Nevertheless, the Cayley tree is not a realistic lattice,
however, its amazing topology makes the exact calculations of
various quantities possible.

One of the most interesting problems in statistical mechanics on a
lattice is the phase transition problem, i.e. deciding whether
there are many different Gibbs measures associated to a given
Hamiltonian \cite{BRZ,BG,Nasir,AGUT}. Investigations of phase
transitions of spin models on hierarchical lattices showed that
they make the exact calculation of various physical quantities
\cite{Sinai,GRRR,GHRR}. It was established the existence of the
phase transition, for the model in terms of finitely correlated
states, which describes ground states of the model. Up to this day
many authors have studied the existence of phase transition by
means of the recurrence equations corresponding to the
Ising-Vanniminus model on Cayley tree of order two and three
\cite{Akin2016,Akin2017,ART,NHSS,RAU}. Recently, Ganikhodjaev
\cite{Nasir} has studied the existence of phase transition for
Ising model on the semi-infinite Cayley tree of second order with
competing interactions up to third-nearest-neighbor generation
with spins belonging to the different branches of the tree. In the
present paper, for a given Hamiltonian, we provide a more general
construction of Gibb measures associated with the Hamiltonian. We
prove the existence of translation-invariant Gibb measures
associated to the model which yield the existence of the phase
transition.

It is well known that the Potts model is a generalization of the
Ising model, but the Potts model on a Cayley tree is not well
studied, compared to the Ising model \cite{Rozikov}. In the last
decade, many researches have investigated Gibbs measures
associated with Potts model on Cayley trees
\cite{GTA,AS2015,AS1,GATTMJ,GTACUBO,AT1}. In \cite{AS2015}, we
studied the existence, uniqueness and non-uniqueness of the Gibbs
measures associated with the Potts model on a Bethe lattice of
order three with three coupling constants by using Markov random
field method. In \cite{AS1}, we have obtained the exact solution
of a phase transition problem by means of Gibbs state of the same
Potts model in \cite{AS1}.

In the present paper, we are concerned with the Ising-Vanniminus
model on an arbitrary order Cayley tree. We investigate
translation invariant Gibbs measures associated with
Ising-Vannimenus model on arbitrary order Cayley tree. We
generalize the results obtained in \cite{Akin2016,Akin2017}. We
use the Markov random field method to describe the Gibbs measures.
We satisfy the Kolmogorov \emph{consistency} condition.  We
propose a rigorous measure-theoretical approach to investigate the
Gibbs measures with memory of length 2 corresponding to the
Ising-Vanniminus model on a Cayley tree of arbitrary order. Also
we take up with trying to determine when phase transition does
occur.

The outline of this paper is as follows. In Section
\ref{PRELIMINARIES} we give the definitions of the Cayley tree,
Gibbs measures and Ising-Vannimenus model. Section
\ref{Construction} provides a construction of Gibbs measures on an
arbitrary order Cayley tree. In Section \ref{even} we establish
the existence, uniqueness and non-uniqueness of the
translation-invariant Gibbs measures by means of the recurrence
equations for $k$-even, while in Section \ref{odd} we do the same
for $k$-odd. We contain in Section \ref{Conlussion} concluding
remarks and discussion of the consequences of the results with
next problems.
\section{PRELIMINARIES}\label{PRELIMINARIES}
\subsection{Cayley trees} Cayley trees (or Bethe lattices) are simple connected
undirected graphs $G = (V, E)$ ($V$ set of vertices, $E$ set of
edges) with no cycles (a cycle is a closed path of different
edges), i.e., they are trees \cite{Ostilli}. Let $\Gamma^k=(V, L,
i)$ be the uniform Cayley tree of order $k$ with a root vertex
$x^{(0)}\in V$, where each vertex has $k + 1$ neighbors with $V$
as the set of vertices and the set of edges. The notation $i$
represents the incidence function corresponding to each edge
$\ell\in L$, with end points $x_1,x_2\in V$. There is a distance
$d(x, y)$ on $V$ the length of the minimal point from $x$ to $y$,
with the assumed length of 1 for any edge (see Figure
\ref{cayley-level2}).

We denote the sphere of radius $n$ on $V$ by $ W_n=\{x\in V:
d(x,x^{(0)})=n \}$ and the ball of radius $n$ by $ V_n=\{x\in V:
d(x,x^{(0)})\leq n \}.$ The set of direct successors of any vertex
$x\in W_n$ is denoted by $S_k(x)=\{y\in W_{n+1}:d(x,y)=1 \}.$
\begin{figure} [!htbp]\label{cayley-level2}
\centering
\includegraphics[width=75mm]{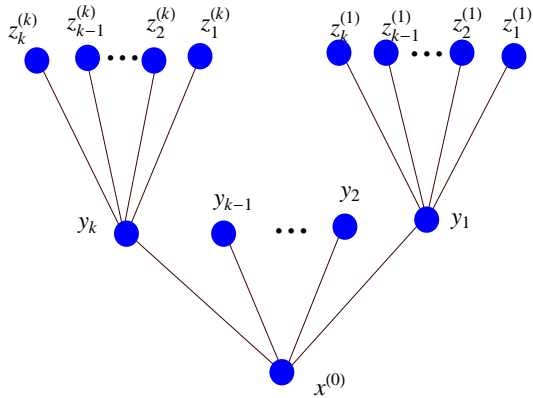}
\caption{Two successive generations of semi-infinite Cayley tree
$\Gamma^k$ of arbitrary order $k>1$ (branching ratio is finite
$k$). The fixed vertex $x^{(0)}$ is the root of the lattice that
emanates $k$ edges of $\Gamma^k$ ($y_j\in S(x^{(0)}),z_i^{(j)}\in
S(y_j)$).}\label{cayley-level2}
\end{figure}
\subsection{Ising-Vannimenus model}
The Ising  model with competing nearest-neighbors interactions is
defined by the Hamiltonian
\begin{equation}\label{hm1}
H(\sigma)=-J\sum_{<x,y>\subset V}\sigma(x)\sigma(y),
\end{equation}
where the sum runs over nearest-neighbor vertices $<x,y>$ and the
spins $\sigma(x)$ and $\sigma(y)$ take values in the set
$\Phi=\{-1,+1\}$.

The Hamiltonian
\begin{equation}\label{hm}
H(\sigma)=-J_p\sum_{>x,y<}\sigma(x)\sigma(y)-J\sum_{<x,y>}\sigma(x)\sigma(y)
\end{equation}
defines the \textbf{Ising-Vannimenus model} with competing
nearest-neighbors and next-nearest-neighbors, with the sum in the
first term representing the ranges of all nearest-neighbors, where
$J_p,J\in \mathbb{R}$ are coupling constants corresponding to
prolonged next-nearest-neighbor and nearest-neighbor potentials
\cite{Vannimenus}.

\subsection{Gibbs measures}
A finite-dimensional distribution of measure $\mu$  in the volume
$V_n$ has been defined by formula
\begin{equation}\label{mu1}
\mu_n(\sigma_n)=\frac{1}{Z_{n}}\exp[-\frac{1}{T}H_n(\sigma)+\sum_{x\in
W_{n}}\sigma(x)h_{x}]
\end{equation}
with the associated partition function defined as
\begin{equation*}\label{mu}
Z_n=\sum_{\sigma_n \in
\Phi^{V_n}}\exp[-\frac{1}{T}H_n(\sigma)+\sum_{x\in
W_{n}}\sigma(x)h_{x}],
\end{equation*}
where the spin configurations $\sigma_n$ belongs to $\Phi^{V_n}$
and $h=\{h_x\in \mathbb{R},x\in V\}$ is a collection of real
numbers that define boundary condition (see \cite{BG,GRRR,GHRR}).
Physically, Eq. \eqref{mu} represents the first step of the
Bethe-Peierls approach \cite{Bethe}.  Bleher \cite{Bleher1990}
proved that the disordered Gibbs distribution \eqref{mu1} in the
ferromagnetic Ising model associated to the Hamiltonian
\eqref{hm1} on the Cayley tree is extreme for $T \geq T^{SG}_C$,
where $T^{SG}_C$ is the critical temperature of the spin glass
model on the Cayley tree, and it is not extreme for $T< T^{SG}_C$.
Previously, researchers frequently used memory of length 1 over a
Cayley tree to study Gibbs measures \cite{BG,GRRR,GHRR}.

Let $S=\{1,2,...,s\}$ be a finite state space. On the infinite
product space ${{S}^{\mathbb{Z}}}$, one can define the product
$\sigma$-algebra, which is generated by cylinder sets
$_{m}[i_1,...,i_N]$  of length $N$ based on the block
$(i_1,...,i_N)$ at the place $m$. We denote by
${{\mathfrak{M}}}({{S}^{\mathbb{Z}}})$ the set of all measures on
${{S}^{\mathbb{Z}}}$. The set of all $\sigma$-invariant measures
in ${{S}^{\mathbb{Z}}}$
is denoted by ${{\mathfrak{M}}_{\sigma }}({{S}^{\mathbb{Z}}})$,
where $\sigma$ is the shift transformation.

\begin{prop}\cite[(8.1) Proposition]{Denker}\label{Kolmo-Cons1}
For $\mu \in {{\mathfrak{M}}_{\sigma }}({{S}^{\mathbb{Z}}})$ the
following properties are valid:
\begin{enumerate}
    \item $\sum\limits_{i\in S}{\mu {{(}_{0}}[i])=1}$;
    \item $\mu (_{n}[i_{0},...,i_{k}])\geq 0$ for any block
$({{i}_{0}},{{i}_{1}},...,{{i}_{k}})\in S^{k+1}$ and any $n\in
\mathbb{Z}$;
    \item $\mu (_{n}[{{i}_{0}},...,{{i}_{k}}])=\sum\limits_{{{i}_{k+1}}\in S}{\mu
    {{(}_{n}}[{{i}_{0}},...,{{i}_{k}},{{i}_{k+1}}])}$;
    \item  $\mu (_{n}[{{i}_{0}},...,{{i}_{k}}])=\sum\limits_{{{i}_{-1}}\in S}{\mu{{(}_{n}}[{{i}_{-1}},{{i}_{0}},...,{{i}_{k}}])}$.
\end{enumerate}
\end{prop}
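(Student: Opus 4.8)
The plan is to verify each of the four properties directly from the definitions, using only that $\mu$ is a probability measure on $S^{\mathbb{Z}}$, that the cylinder sets are measurable, and---for the last property alone---that $\mu$ is $\sigma$-invariant. The unifying observation is that refining or coarsening a cylinder in a single coordinate produces a finite \emph{disjoint} decomposition, so the finite additivity of $\mu$ does all the work.

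First, for property (1), I would note that the sets $\{{}_{0}[i]\}_{i\in S}$ are pairwise disjoint and their union is all of $S^{\mathbb{Z}}$, since every sequence takes exactly one value at coordinate $0$. Hence $\sum_{i\in S}\mu({}_{0}[i])=\mu(S^{\mathbb{Z}})=1$ because $\mu$ is a probability measure. Property (2) is immediate: cylinder sets are measurable and $\mu$, being a measure, is non-negative on every measurable set and every base point $n$.

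Next, for property (3), I would fix the block $(i_{0},\ldots,i_{k})$ and observe that appending one further symbol $i_{k+1}$ on the right partitions the cylinder ${}_{n}[i_{0},\ldots,i_{k}]$ into the pairwise disjoint cylinders $\{{}_{n}[i_{0},\ldots,i_{k},i_{k+1}]\}_{i_{k+1}\in S}$, whose union is the original cylinder. Finite additivity then yields the stated identity. Observe that properties (1)--(3) require nothing beyond $\mu$ being a probability measure; the base point $n$ stays fixed under this right-refinement, so no invariance is invoked.

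The only property that genuinely uses $\sigma$-invariance is (4), and this is the step to get right. Summing ${}_{n}[i_{-1},i_{0},\ldots,i_{k}]$ over $i_{-1}\in S$ frees the coordinate at place $n$ while keeping the block $(i_{0},\ldots,i_{k})$ fixed at places $n+1,\ldots,n+k+1$; by finite additivity this sum equals $\mu({}_{n+1}[i_{0},\ldots,i_{k}])$. Invoking the $\sigma$-invariance of $\mu$ then gives $\mu({}_{n+1}[i_{0},\ldots,i_{k}])=\mu({}_{n}[i_{0},\ldots,i_{k}])$, which is precisely the right-hand side of the claim. The essential---and effectively the only---subtlety is this re-indexing: coarsening on the left shifts the base point of the cylinder by one, so shift-invariance is exactly what is needed to restore the base point $n$, in contrast to the right-refinement of (3), which leaves the base point fixed and needs no invariance.
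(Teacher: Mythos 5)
Your proof is correct, and it is worth noting that the paper itself contains no proof of this proposition at all: it is imported verbatim from \cite[(8.1) Proposition]{Denker}, and the surrounding text merely remarks that the properties ``can clearly be checked'' for Bernoulli and Markov measures, deferring everything else to Kolmogorov's consistency theorem. So your blind attempt is, if anything, more complete than what the paper supplies. Your verification is the standard one and it gets the only delicate point right: properties (1)--(3) are pure measure theory (disjoint partition of $S^{\mathbb{Z}}$ by the sets ${}_{0}[i]$, non-negativity, and the right-refinement ${}_{n}[i_0,\ldots,i_k]=\bigcup_{i_{k+1}\in S}{}_{n}[i_0,\ldots,i_k,i_{k+1}]$, which leaves the base point $n$ untouched, so finite additivity --- enough, since $S$ is finite --- does all the work), whereas (4) is the one place $\sigma$-invariance enters. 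Under the paper's convention that ${}_{m}[i_1,\ldots,i_N]$ is the cylinder on the block placed at coordinate $m$, coarsening on the left indeed gives $\sum_{i_{-1}\in S}\mu({}_{n}[i_{-1},i_0,\ldots,i_k])=\mu({}_{n+1}[i_0,\ldots,i_k])$, and shift-invariance is exactly what restores the base point to $n$; a reader who overlooks this base-point shift would wrongly conclude that (4), like (3), holds for arbitrary measures. Your explicit flagging of that asymmetry between right-refinement and left-coarsening is precisely the content of the result as stated for $\mu\in\mathfrak{M}_{\sigma}(S^{\mathbb{Z}})$, and it matches the argument one finds in the cited source.
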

The proof of the Proposition \ref{Kolmo-Cons1} can clearly be
checked for both the Bernoulli and the Markov measures on
$\sigma$-algebra \cite{Denker}. By a special case of Kolmogoroff's
consistency theorem (see \cite{Denker}), these properties are
sufficient to define a measure. It is well known that a Gibbs
measure is a generalization of a Markov measure to any graph,
therefore any Gibbs measure should satisfy the conditions in the
Proposition \ref{Kolmo-Cons1}. In the next sections, we will show
that the Gibbs measure associated to the Ising-Vannimenus model
satisfies the conditions in the Proposition \ref{Kolmo-Cons1}.

Let us consider increasing subsets of the set of states for one
dimensional lattices \cite{FV} as follows:
$$\mathfrak{G}_1\subset \mathfrak{G}_2\subset... \subset \mathfrak{G}_n\subset...,
$$
where $\mathfrak{G}_n$ is the set of states corresponding to
non-trivial correlations between $n$-successive lattice points;
$\mathfrak{G}_1$ is the set of mean field states; and
$\mathfrak{G}_2$ is the set of Bethe-Peierls states, the latter
extending to the so-called Bethe lattices. All these states
correspond in probability theory to so-called Markov chains with
memory of length $n$ (see \cite{D1,FV,Zachary}).

In \cite{FV},  by using  the idea of Bayesian extension, Fannes
and Verbeure defined states known as a finite-block measure or as
Markov chains with memory of length $n$ on the lattices. Recently,
the author has studied the Gibbs measures with memory of length 2
associated to the Ising-Vannimenus model on the Cayley tree of
order two and three \cite{Akin2016,Akin2017}. The construction is
based on the idea in the Proposition \ref{Kolmo-Cons1}. In the
present paper, we are going to establish the existence of Gibbs
measures associated with the Ising-Vannimenus model with memory of
length 2 on the Cayley tree of \emph{arbitrary order}.

\section{Construction of Gibbs measures on Cayley
tree}\label{Construction}

In this section, we will presents the general structure of Gibbs
measures with memory of length 2 associated with the Hamiltonian
\eqref{hm}  on an arbitrary order Cayley tree. On non-amenable
graphs, Gibbs measures depend on boundary conditions
\cite{Rozikov}. This paper considers this dependency for Cayley
trees, the simplest of graphs.

An arbitrary edge $<x^{(0)},x^{(1)}>=\ell \in L$ deleted from a
Cayley tree $\Gamma _{1}^{k}$ and $\Gamma _{0}^{k}$ splits into
two components: semi-infinite Cayley tree $\Gamma _{1}^{k}$ and
semi-infinite Cayley tree $\Gamma _{0}^{k}$. This paper considers
a semi-infinite Cayley tree $\Gamma _{0}^{k}$. For a finite subset
$V_n$ of the lattice, we define the finite-dimensional Gibbs
probability distributions on the configuration space
$\Omega^{V_n}=\{\sigma_n=\{\sigma(x)=\pm 1, x\in V_n \}\}$ at
inverse temperature $\beta=\frac{1}{kT}$ by formula.

Let $x\in W_{n}$ for some $n$ and
$S(x)=\{{{y}_{1}},{{y}_{2}},\cdots ,{{y}_{k}}\}$ are the direct
successors of $x$, where ${{y}_{1}},{{y}_{2}},\cdots ,{{y}_{k}}\in
{{W}_{n+1}}$.

Denote ${{B}_{1}}(x)=\left(
\begin{matrix}
   {{y}_{k}},\cdots ,{{y}_{2}},{{y}_{1}}  \\
   x  \\
\end{matrix} \right)$ a unite semi-ball with a center $x$.
We denote the set of all spin configurations on $V_n$ by
$\Phi^{V_n}$ and the set of all configurations on unite semi-ball
$B_1(x)$ by $\Phi^{B_1(x)}$. One can get that the set
$\Phi^{B_1(x)}$ consists of ${{2}^{k+1}}$ configurations:
\[
\Phi^{B_{1}(x)}=\left\{\left(\begin{matrix}
i_{k},\cdots ,i_{2},i_{1}  \\
   i  \\
\end{matrix} \right):i,i_{1},i_{2},\cdots ,i_{k}\in \Phi  \right\}.
\]
Let
\[
\sigma_{S}({{x}^{(0)}})=\left( \begin{matrix}
   \sigma ({{y}_{k}}),\cdots ,\sigma ({{y}_{2}}),\sigma ({{y}_{1}})  \\
   \sigma ({{x}^{(0)}})  \\
\end{matrix} \right)
\]
be a configuration on the set ${{x}^{(0)}}\cup S({{x}^{(0)}})$ and
\[
\sigma_{S}({{y}_{i}})=\left( \begin{matrix}
   \sigma (z_{k}^{(i)}),\cdots ,\sigma (z_{2}^{(i)}),\sigma (z_{1}^{(i)})  \\
   \sigma ({{y}_{i}})  \\
\end{matrix} \right)
\]
be a configuration on the set ${{y}_{i}}\cup S({{y}_{i}}),$ ${y}_{i}\in S({{x}^{(0)}})$.
Let $\Omega(S)$ be the set of all such configurations.


We wish to consider a probability measure $\mu
_{\mathbf{h}}^{(n)}$ that is formally given by
\begin{equation}\label{Gibbs1}
\mu _{\mathbf{h}}^{(n)}(\sigma )=\frac{1}{Z_{\mathbf{h}}^{(n)}}
\exp [-\beta {{H}_{n}}(\sigma )+\sum\limits_{x\in
{{W}_{n-1}}}{\sum\limits_{y\in S(x)}{\sigma }}
(x)\prod\limits_{y\in S(x)}{\sigma
(y)h_{B_{1}(y);\sigma_{S}(y)}}],
\end{equation}
where $\beta =\frac{1}{{{k}_{B}}T}$, ${{k}_{B}}$ is the Boltzmann
constant and $h_{B_{1}(y);\sigma_{S}(y)}$ is a real-valued
function of $y\in V$. $\sigma_n: x\in V_n\to \sigma_n(x)$ and
$Z_{\mathbf{h}}^{(n)}$ corresponds to the following partition
function:
\begin{equation}\label{partition1}
Z_{\mathbf{h}}^{(n)}=\sum\limits_{{{\sigma }_{n}}\in {{\Phi
}^{{{V}_{n}}}}}{\exp }[-\beta H({{\sigma }_{n}})+\sum\limits_{x\in
{{W}_{n-1}}}{\sum\limits_{y\in S(x)}{\sigma }}
(x)\prod\limits_{y\in S(x)}{\sigma
(y){{h}_{B_{1}(y);\sigma_{S}(y)}}}].
\end{equation}
In this paper, we suppose that vector valued function
$\textbf{h}:V\rightarrow \mathbb{R}^{2(k+1)}$ is defined by
\begin{equation}\label{consistency}
\textbf{h}:(x,y_k,y_{k-1},\ldots, y_2,y_1)\rightarrow
\textbf{h}_{B_1(x)}=(h_{B_1(x);\sigma_{S}({{x}})}:y_i\in S(x)),
\end{equation}
where $h_{B_1(x);\sigma_{S}({{x}})}\in \mathbb{R}$, $x\in W_{n-1}$
and $y_i\in S(x).$

We will consider a construction of an infinite volume distribution
with given finite-dimensional distributions. More exactly, we will
attempt to find a probability measure $\mu$ on $\Omega$  that is
compatible with given measures $\mu_{\textbf{h}}^{(n)}$,
\emph{i.e.},
\begin{equation}\label{CM}
\mu(\sigma\in\Omega:
\sigma|_{V_n}=\sigma_n)=\mu^{(n)}_{\textbf{h}}(\sigma_n), \ \ \
\textrm{for all} \ \ \sigma_n\in\Omega^{V_n}, \ n\in \mathbf{N}.
\end{equation}
We say that the probability distributions $\mu_{\textbf{h}}^{n}$
satisfy the Kolmogorov consistency condition if for any
configuration $\sigma_{n-1}\in\Omega^{V_{n-1}}$
\begin{equation}\label{comp}
\sum_{\omega\in\Omega^{W_n}}\mu^{(n)}_{\textbf{h}}(\sigma_{n-1}\vee\omega)=\mu^{(n-1)}_{\textbf{h}}(\sigma_{n-1}).
\end{equation}
This condition implies the existence of a unique measure
$\mu_{\textbf{h}}^{(n)}$ defined on $\Omega$ with a required
condition \eqref{CM}. Such a measure $\mu_{\textbf{h}}^{(n)}$ is
called a Gibbs measure with memory of length 2 associated to the
model \eqref{hm}.

We note that first two conditions of Proposition \ref{Kolmo-Cons1}
is trivial to check for the measure in \eqref{Gibbs1}. The
condition (3) of Proposition \ref{Kolmo-Cons1} is the same as the
condition in \eqref{comp}. Therefore, it should be proved that the
Gibbs measure \eqref{Gibbs1} satisfies the condition (3) in the
Proposition \ref{Kolmo-Cons1}.

\section{The recurrence equations for $k$-even}\label{even}
Let $k$ be the positive even integer, where $k$ is the order of
the Cayley tree. It is reasonable, though, to assume that the
different branches are equivalent, as is usually done for models
on trees.

Let
\[
\sigma _{S}^{+}({{x}^{(0)}})=\left(
\begin{matrix}
   \sigma ({{y}_{k}}),\cdots ,\sigma ({{y}_{2}}),\sigma ({{y}_{1}})  \\
   +  \\
\end{matrix} \right)
\]
be a configuration in $\Phi^{B_1(x)}$ (see Fig.
\ref{cayley-level2}). Let $m$ be the number of spins down, i.e.,
$\sigma ({{y}_{i}})=-1$ on the first level ${{W}_{1}}$, where
$0\leq m\leq k$. Then $(k-m)$ is the number of spins up, i.e.,
$\sigma ({{y}_{i}})=+1$ on the first level ${{W}_{1}}$. Let
\[\sigma _{S}^{+}({{y}_{i}})=\left( \begin{matrix}
   \sigma (z_{k}^{(i)}),\cdots ,\sigma (z_{2}^{(i)}),\sigma (z_{1}^{(i)})\\
   +  \\
\end{matrix} \right)\] be a configuration in $\Omega(S)$. Let $m$ be the number of spins down, i.e.,
$\sigma (z_{j}^{(i)})=-1$ on the second level ${{W}_{2}}$, where
$0\leq m \leq k.$
Let
\[\sigma _{S}^{-}({{x}^{(0)}})=\left(
\begin{matrix}
   \sigma ({{y}_{k}}),\cdots ,\sigma ({{y}_{2}}),\sigma ({{y}_{1}})\\
   -  \\
\end{matrix} \right)
\] be a configuration in $\Omega(S)$.
Let $m$ be the number of spins down, i.e., $\sigma (y_{i})=-1$ on
the first level ${{W}_{1}}$, where $0\leq m\leq k.$ Let
\[\sigma _{S}^{-}({{y}_{i}})=\left(
\begin{matrix}
   \sigma (z_{k}^{(i)}),\cdots ,\sigma (z_{2}^{(i)}),\sigma (z_{1}^{(i)})  \\
   -  \\
\end{matrix} \right)
\] be a configuration in $\Omega(S)$ (see Fig. \ref{cayley-level2}). Let $m$ be the number of spins down, i.e.,
$\sigma (z_{j}^{(i)})=-1$ on the second level ${{W}_{2}}$, where
$0 \leq m \leq k.$

For clarity, denote the configuration of the set
$\Phi^{B_1(x^{(0)})}$ by
\[S_{m}^{(k-m)}(\sigma ({{x}^{(0)}}))=\left( \begin{matrix}
   \overbrace{++\cdots +}^{k-m}\overbrace{--\cdots -}^{m}  \\
   \sigma (x^{(0)})  \\
\end{matrix} \right).\]
From  the consistency condition \eqref{comp}, we can use the
following equation:

\begin{thm}\cite{Akin2016}\label{theorem1}
The measures  $\mu_{\h}^{(n)}(\s)$, $n=1,2,...,$ in \eqref{mu}
satisfy the compatibility condition \eqref{comp} if and only if
for any $n\in \mathbf{N}$  the following equations hold:
\begin{eqnarray*}
\exp
(h_{B_{1}(x^{(0)});S_{0}^{k}(+)}+h_{B_{1}(x^{(0)});S_{0}^{k}(-)})&=&\frac{\left(
\sum\limits_{i=0}^{k}\left(
\begin{array}{c}
 k \\
 i
\end{array}
\right)(ab)^{k-2i}(-1)^{i}{{h}_{{{B}_{1}}(y_i);S_{i}^{k-i}(+)}}\right)^{k}}{{{\left(
\sum\limits_{i=0}^{k}{\left(
\begin{array}{c}
 k \\
 i
\end{array}
\right){{\left( \frac{a}{b}
\right)}^{k-2i}}}(-1)^{i}{{h}_{{{B}_{1}}(y_i);S_{i}^{k-i}(+)}}
\right)}^{k}}}\\
\exp
(h_{B_{1}(x^{(0)});S_{0}^{k}(+)}+h_{B_{1}(x^{(0)});S_{k}^{0}(-)})&=&\frac{\left(
\sum\limits_{i=0}^{k}\left(
\begin{array}{c}
 k \\
 i
\end{array}
\right)(ab)^{k-2i}(-1)^{i}{{h}_{{{B}_{1}}(y_i);S_{i}^{k-i}(+)}}\right)^{k}}{\left(
\sum\limits_{i=0}^{k}\left(
\begin{array}{c}
 k \\
 i
\end{array}
\right)(ab)^{2i-k}{{(-1)}^{i+1}}{{h}_{{{B}_{1}}(y_i);S_{i}^{k-i}(-)}}\right)^{k}}\\
\exp
(h_{B_{1}(x^{(0)});S_{k}^{0}(+)}+h_{B_{1}(x^{(0)});S_{k}^{0}(-)})&=&\frac{
\left( \sum\limits_{i=0}^{k}\left(%
\begin{array}{c}
  k \\
  i \\
\end{array}%
\right)\left(\frac{b}{a}\right)^{k-2i}{{(-1)}^{i+1}}{{h}_{{{B}_{1}}(y_i);S_{i}^{k-i}(-)}}
\right)^{k}}{\left( \sum\limits_{i=0}^{k}\left(
\begin{array}{c}
 k \\
 i
\end{array}
\right)(ab)^{2i-k}{{(-1)}^{i+1}}{{h}_{{{B}_{1}}(y_i);S_{i}^{k-i}(-)}}\right)^{k}}
\end{eqnarray*}
where $a=e^{\beta J}$ and $b=e^{\beta J_p}$.
\end{thm}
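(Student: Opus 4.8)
The plan is to establish the compatibility condition \eqref{comp} by direct computation, exploiting the product structure of the measure \eqref{Gibbs1} over the direct successors. First I would fix $n\in\mathbf{N}$ and a configuration $\sigma_{n-1}\in\Omega^{V_{n-1}}$, and write out the left-hand side of \eqref{comp} by summing $\mu^{(n)}_{\h}(\sigma_{n-1}\vee\omega)$ over all $\omega\in\Omega^{W_n}$. Because the Hamiltonian \eqref{hm} couples a vertex only to its nearest and next-nearest neighbors, the sum over the spins on the outermost sphere $W_n$ factorizes into a product over the vertices $y\in W_{n-1}$, each factor being a sum over the spin configurations on the semi-ball $B_1(y)$. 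This reduces the global consistency requirement to a local identity at each vertex, expressed in terms of the boundary functions $h_{B_1(y);\sigma_S(y)}$ and the Boltzmann weights $a=e^{\beta J}$ and $b=e^{\beta J_p}$.

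Next I would evaluate these local factors explicitly. Since $k$ is even and we assume the branches are equivalent, each successor configuration on $B_1(x^{(0)})$ is determined by the center spin $\sigma(x^{(0)})=\pm 1$ and by the number $m$ of down-spins among the $k$ successors, giving the notation $S_m^{(k-m)}(\sigma(x^{(0)}))$. Summing over the second-level spins $z_j^{(i)}$ produces, for each first-level vertex $y_i$ and each choice of its spin, a binomial sum $\sum_{i=0}^{k}\binom{k}{i}(\cdots)^{k-2i}(-1)^{i}h_{B_1(y_i);S_i^{k-i}(\pm)}$, where the base of the power $(ab)$ or $a/b$ records the competing nearest- and next-nearest-neighbor contributions through the sign of the exponent. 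Taking ratios of the resulting expressions for different center configurations $(+,+)$, $(+,-)$, and $(-,-)$ causes the partition function $Z_{\h}^{(n)}$ and the common prefactors to cancel, isolating the three displayed relations for the sums $h_{B_1(x^{(0)});S_0^k(\pm)}$ and $h_{B_1(x^{(0)});S_k^0(\pm)}$.

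For the converse direction, I would assume the three equations hold and reconstruct the compatibility \eqref{comp} by reversing the cancellation: the ratios determine the boundary functions at level $n-1$ in terms of those at level $n$ up to a multiplicative constant, and this constant is precisely absorbed into the normalization $Z_{\h}^{(n-1)}$. This is exactly the step that verifies condition (3) of Proposition \ref{Kolmo-Cons1}, which as noted in the text coincides with \eqref{comp}.

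The main obstacle will be the bookkeeping in the local factorization: one must track carefully how the next-nearest-neighbor coupling $J_p$ (through the prolonged bonds $>x,y<$) distributes the weight $b$ across the two levels while the nearest-neighbor coupling $J$ contributes $a$, so that the exponents $k-2i$, $2i-k$ on the bases $ab$, $a/b$, $b/a$ appear with the correct signs in each of the three equations. Keeping the combinatorial factor $\binom{k}{i}$ and the alternating sign $(-1)^i$ consistent across all cases, and confirming that the final ratios are genuinely independent of the normalization, is where the computation must be done with care rather than appealing to symmetry alone.
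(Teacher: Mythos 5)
Your proposal is correct and follows essentially the same route as the paper, which defers the detailed argument to \cite{Akin2016} but displays exactly this computation: the local identity \eqref{mu2} obtained by summing the second-level spins $\eta(z_j^{(i)})$ and factorizing over the successors, the binomial sums \eqref{Eq-eden1a}--\eqref{Eq-eden1d} for the extreme configurations $S_0^k(\pm)$, $S_k^0(\pm)$, the interpolation relations \eqref{Eq-eden1e}--\eqref{Eq-eden1f} handling intermediate $m$, and the cancellation of the prefactor $L_2=Z_1/Z_2$ upon taking ratios of the $(+,+)$, $(+,-)$, $(-,-)$ center cases. Your bookkeeping of the bases $ab$, $a/b$, $b/a$ with exponents $k-2i$, $2i-k$, the factors $\binom{k}{i}(-1)^i$, and the absorption of the free multiplicative constant into the normalization in the converse direction all coincide with the paper's scheme, so there is nothing to flag.
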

The Theorem \ref{theorem1} partially confirms the conjecture
formulated in \cite{Akin2016}. Also, the proof of the Theorem
\ref{theorem1} can be done as similar to \cite{Akin2016}.

By means of the last equalities, from \eqref{Gibbs1} and
\eqref{partition1} we can get that
\begin{eqnarray}\label{mu2}
&&\exp (\sigma(x^{(0)})\prod_{y\in S(x^{(0)})} \sigma
(y)h_{B_1(x^{(0)});S_m^{k-m}(\sigma (x^{(0)}))})\\\nonumber
&=&L_2\sum _{\eta \in \Phi ^{W_2}}( \exp (\beta J\sum _{y_i\in
S(x^{(0)})} \sigma (y_i)\sum _{z_j^{(i)}\in S(y_i)} \eta
(z_j^{(i)}))\\\nonumber &&\times \exp(\beta J_p\sigma
(x^{(0)})\sum _{z_j^{(i)}\in S^2(x^{(0)})} \eta (z_j^{(i)})+\sum
_{y_i\in S(x^{(0)})} \sigma (y_i)\prod _{z_j^{(i)}\in S(y_i)} \eta
(z_j^{(i)})h_{B_1(y_i);S_m^{k-m}(\sigma (y_i))}))\nonumber
\end{eqnarray}
where ${{L}_{2}}=\frac{{{Z}_{1}}}{{{Z}_{2}}}. $

Consider the configuration $S_{0}^{k}(\sigma
({{x}^{(0)}})=+)=\left(
\begin{matrix}
   +,\cdots ,+,+  \\
   +  \\
\end{matrix} \right)
$.
For the sake of simplicity, assume such that $\exp
[(-1)^{m}{{h}_{{{B}_{1}}(y_i);S_{m}^{k-m}(\sigma
(y)=+1)}}]=u_{1+m}^{{{(-1)}^{m}}}$, we have
\begin{eqnarray}\label{Eq-eden1a}
u_{1}^{'}=\exp (h_{B_{1}(x^{(0)});S_{0}^{k}(+)})=L_{2}\left(
\sum\limits_{i=0}^{k}\left(
\begin{array}{c}
 k \\
 i
\end{array}
\right)(ab)^{-2i+k}u_{1+i}^{{{(-1)}^{i}}} \right)^{k}.
\end{eqnarray}
Now let us consider the configuration $S_{k}^{0}(\sigma
({{x}^{(0)}})=+)=\left(
\begin{matrix}
   -,\cdots ,-,-  \\
   +  \\
\end{matrix} \right)$ and let
$$
\exp [{{(-1)}^{m+1}}{{h}_{{{B}_{1}}(y_i);S_{m}^{k-m}(\sigma
(y_i)=-1)}}]=u_{k+2+m}^{{{(-1)}^{m+1}}},
$$
then we have
\begin{eqnarray}\label{Eq-eden1b}
u_{k+1}^{'}=\exp (h_{B_{1}(x^{(0)});S_{k}^{0}(+)})=L_{2}
\left( \sum\limits_{i=0}^{k}\left(%
\begin{array}{c}
  k \\
  i \\
\end{array}%
\right)\left(\frac{b}{a}\right)^{-2i+k}u_{2+i+k}^{{(-1)}^{1+i}}
\right)^{k}.
\end{eqnarray}
Similarly, for the configuration $S_{0}^{k}(\sigma
({{x}^{(0)}})=-)=\left( \begin{matrix}
   +,\cdots ,+,+  \\
   -  \\
\end{matrix} \right)$, one can obtain
\begin{eqnarray}\label{Eq-eden1c}
{{\left( u_{k+2}^{'} \right)}^{-1}}=\exp
(-h_{B_{1}(x^{(0)});S_{0}^{k}(-)})={{L}_{2}}{{\left(
\sum\limits_{i=0}^{k}{\left(
\begin{array}{c}
 k \\
 i
\end{array}
\right){{\left( \frac{a}{b}
\right)}^{-2i+k}}}u_{1+i}^{{{(-1)}^{i}}} \right)}^{k}}.
\end{eqnarray}
Lastly, for the configuration
$S_{k}^{0}(\sigma
({{x}^{(0)}})=-)=\left(
\begin{matrix}
   -,\cdots ,-,-  \\
   -  \\
\end{matrix} \right)$
we have
\begin{eqnarray}\label{Eq-eden1d}
{{\left( u_{2(k+1)}^{'} \right)}^{-1}}=\exp
(-h_{B_{1}(x^{(0)});S_{k}^{0}(-)})=L_{2}\left(
\sum\limits_{i=0}^{k}\left(
\begin{array}{c}
 k \\
 i
\end{array}
\right)(ab)^{2i-k}u_{2+i+k}^{(-1)^{1+i}} \right)^{k}.
\end{eqnarray}
From \eqref{Eq-eden1a}-\eqref{Eq-eden1d} we immediately get that
\begin{eqnarray}\label{Eq-eden1e}
e^{(-1)^{m}h_{B_{1}(x^{(0)});S_{m}^{k-m}(+)}}&=&\left(e^{h_{B_{1}(x^{(0)});S_{0}^{k}(+)}}
\right)^{\frac{k-m}{k}}\left(e^{h_{B_{1}(x^{(0)});S_{k}^{0}(+)}}
\right)^{\frac{m}{k}}\\
e^{(-1)^{m+1}h_{B_{1}(x^{(0)});S_{m}^{k-m}(-)}}&=&\left(e^{
-h_{B_{1}(x^{(0)});S_{0}^{k}(-)}} \right)^{\frac{k-m}{k}}\left(
e^{-h_{B_{1}(x^{(0)});S_{k}^{0}(-)}}\right)^{\frac{m}{k}}.\label{Eq-eden1f}
\end{eqnarray}
Through the introduction of the new variables
$v_{i}=(u_{i})^{\frac{1}{k}}$ in the equations
\eqref{Eq-eden1a}-\eqref{Eq-eden1f}, we derive the following
recurrence system:
\begin{eqnarray}\label{recurrence1a}
v_{1}^{'}&=&\sqrt[k]{L_2}{{\left(\frac{{{(ab)}^{2}}{{v}_{1}}+{{v}_{k+1}}}{ab}\right)}^{k}},\\\label{recurrence1b}
v_{k+1}^{'}&=&\sqrt[k]{L_2}{{\left(\frac{{{a}^{2}}{{v}_{k+2}}+{{b}^{2}}{{v}_{2(k+1)}}}{ab{{v}_{k+2}}{{v}_{2(k+1)}}}\right)}^{k}},\\\label{recurrence1c}
(v_{k+2}^{'})^{-1}&=&\sqrt[k]{L_2}{{\left(\frac{{{a}^{2}}{{v}_{1}}+{{b}^{2}}{{v}_{k+1}}}{ab}\right)}^{k}},\\\label{recurrence1d}
(v_{2(k+1)}^{'})^{-1}&=&\sqrt[k]{L_2}{{\left(\frac{{{(ab)}^{2}}{{v}_{k+2}}+{{v}_{2(k+1)}}}{(ab){{v}_{k+2}}{{v}_{2(k+1)}}}\right)}^{k}}.
\end{eqnarray}

\subsection{Translation-invariant Gibbs measures: Even
case}\label{TIGM}
In this subsection, we are going to focus on the
existence of translation-invariant Gibbs measures (TIGMs) by
analyzing the equations \eqref{recurrence1a}-\eqref{recurrence1d}.
Note that vector-valued function
\begin{equation}\label{consistency1}
\textbf{h}(x) = \{h_{B_1(x);S_{m}^{k-m}(\sigma (x))}:m\in
\{0,1,2,...,k\},\sigma (x)\in \Phi \}
\end{equation}
is considered as translation-invariant if
$h_{B_1(x);S_{m}^{k-m}(\sigma (x))}= h_{B_1(y);S_{m}^{k-m}(\sigma
(y))}$ for all $y\in S(x)$ and $\sigma (x)=\sigma (y)$ (see for
details \cite{Akin2016,Rozikov}). A translation-invariant Gibbs
measure is defined as a measure, $\mu_{\textbf{h}}$, corresponding
to a translation-invariant function $\textbf{h}$ (see for details
\cite{NHSS,Rozikov}). Here we will assume that $v'_{i}=v_{i}$ for
all $i\in \{1,\ldots,2(k+1)\}$.
\begin{rem}\label{even-consistency-rem}
By using the equations \eqref{Eq-eden1e} and \eqref{Eq-eden1f}, it
can be shown that if the vector-valued function $\textbf{h}(x)$
given in \eqref{consistency1} has the following form:
\begin{equation*}
\textbf{h}(x)=(p,-\frac{(k-1)p+q}{k},\cdots,-\frac{p+(k-1)q}{k},
q,r,-\frac{(k-1)r+s}{k},\cdots,-\frac{r+(k-1)s}{k},s),
\end{equation*}
where $p,q,r,s\in \mathbb{R}$, then the \emph{consistency}
condition \eqref{comp} is satisfied.
\end{rem}
Now, we want to find Gibbs measures for  considered case. To do
so, we introduce some notations. Define the transformation
\begin{eqnarray}\label{cavity-even}
\textbf{F}=(F_1,F_{k+1},F_{k+2},F_{2(k+1)}): \mathbf{R}^4_+
\rightarrow \mathbf{R}^4_+
\end{eqnarray}
such that
\begin{eqnarray*}
v'_1&=&F_1(v_1,v_{k+1},v_{k+2},v_{2(k+1)}),\\
v'_{k+1}&=&F_{k+1}(v_1,v_{k+1},v_{k+2},v_{2(k+1)}),\\
v'_{k+2}&=&F_{k+2}(v_1,v_{k+1},v_{k+2},v_{2(k+1)}),\\
v'_{2(k+1)}&=&F_{2(k+1)}(v_1,v_{k+1},v_{k+2},v_{2(k+1)}).
\end{eqnarray*}
The fixed points of the cavity equation
$\textbf{v}=\textbf{F}(\textbf{v})$ given in the equation
\eqref{cavity-even} describe the translation-invariant Gibbs
measures associated to the model corresponding to the Hamiltonian
\eqref{hm}, where $\textbf{v}=(v_{1}, v_{k+1},
v_{k+2},v_{2(k+1)})$ and $k$ is positive even integer.

Description of the solutions of the system of equations
\eqref{recurrence1a}-\eqref{recurrence1d} is rather tricky. Assume
that $v_{1}^{'}=v_{k+2}^{'}$ and $v_{k+1}^{'}=v_{2(k+1)}^{'}$ that
is
\begin{eqnarray*}
\exp
({{(-1)}^{0}}{{h}_{{{B}_{1}}({{x}^{(0)}});S_{0}^{k}(+)}})&=&\exp
(-{{h}_{{{B}_{1}}({{x}^{(0)}});S_{0}^{k}(-)}})\\
\exp
({{(-1)}^{k}}{{h}_{{{B}_{1}}({{x}^{(0)}});S_{k}^{0}(+)}})&=&\exp
(-{{h}_{{{B}_{1}}({{x}^{(0)}});S_{k}^{0}(-)}}).
\end{eqnarray*}

Below we will consider the following case when the system of
equations \eqref{recurrence1a}-\eqref{recurrence1d} is solvable
for set

\begin{eqnarray}\label{invariantA}
A=\left\{(v_{1}^{'}, v_{k+1}^{'}, v_{k+2}^{'},v_{2(k+1)}^{'})\in
\mathbb{R}_+^4:
v_{1}^{'}=v_{k+2}^{'}=\frac{1}{v_{k+1}^{'}}=\frac{1}{v_{2(k+1)}^{'}}
\right\}.
\end{eqnarray}

Divide the equation \eqref{recurrence1a} by the equation
\eqref{recurrence1c}, then we have
\begin{eqnarray}\label{recurrence2a}
(v_{1}^{'})={{\left(
\frac{{{(ab)}^{2}}{{v}_{1}}+{{v}_{k+1}}}{{{a}^{2}}{{v}_{1}}+{{b}^{2}}{{v}_{k+1}}}
\right)}^{\frac{k}{2}}}.
\end{eqnarray}
Similarly, divide  the equation \eqref{recurrence1b} by the
equation \eqref{recurrence1d}, then we get
\begin{eqnarray}\label{recurrence2b}
v_{k+1}^{'}={{\left(
\frac{{{a}^{2}}{{v}_{k+2}}+{{b}^{2}}{{v}_{2(k+1)}}}{{{(ab)}^{2}}{{v}_{k+2}}+{{v}_{2(k+1)}}}
\right)}^{\frac{k}{2}}}={{\left(
\frac{{{a}^{2}}{{v}_{1}}+{{b}^{2}}{{v}_{(k+1)}}}{{{(ab)}^{2}}{{v}_{1}}+{{v}_{(k+1)}}}
\right)}^{\frac{k}{2}}}.
\end{eqnarray}
For brevity, denote $a^2=c$ and $b^2=d$. From \eqref{recurrence2a}
and \eqref{recurrence2b}, if we assume as
$x'=v_{1}^{'}=\frac{1}{v_{k+1}^{'}}$ ($x>0$), then we obtain the
following dynamical system $f: \mathbb{R}^{+}\rightarrow
\mathbb{R}^{+}$
\begin{eqnarray}\label{dynamiceven-1}
x'=\left(\frac{1+c d x^{2}}{d+c x^{2}}\right)^{\frac{k}{2}}=:f(x).
\end{eqnarray}


\begin{rem}\label{Remark 4.2}
For $(v_1,v_{k+1},v_{k+2},v_{2(k+1)})\in A$, that is
$$
h_{B_1(x);S_{0}^{k}(+)}=h_{B_1(x);S_{0}^{k}(-)}=-h_{B_1(x);S_{k}^{0}(+)}=-h_{B_1(x);S_{k}^{0}(-)}
$$
the equation \eqref{dynamiceven-1} is valid. Also, one can verify
that $\textbf{F}(A)\subset A.$ In other words, the set $A$ in
\eqref{invariantA} is an invariant set under the mapping
$\textbf{F}$ in \eqref{cavity-even}.
\end{rem}

Let us investigate the fixed points of the function given in
\eqref{dynamiceven-1}, i.e., $x = f(x)$. In fact, we should show
that the system \eqref{dynamiceven-1} has at least one solution
with respect to $x'$ in the domain $\mathbb{R}^{+}$.
It is obvious that $f$ is bounded and thus the curve $y = f(x)$
must intersect the line $y =m x.$ Therefore, this construction
provides one element of a new set of Gibbs measures with memory of
length 2 associated to the model \eqref{hm} for any $x\in
\mathbb{R}^{+}$ (see \cite[Proposition 10.7]{Preston}).

\begin{prop}\label{proposition-even}
The equation \eqref{dynamiceven-1} (with $x \geq 0, c> 0, d> 0$)
has one solution if  $d<1$. If $d
> \sqrt{\frac{k+1}{k-1}}$ then there exists $\eta_1(c,d)$, $\eta_2(c,d)$ with
$0<\eta_1(c,d)<\eta_2(c,d)$ such that equation
\eqref{dynamiceven-1} has 3 solutions if
$\eta_1(c,d)<m<\eta_2(c,d)$ and has 2 solutions if either
$\eta_1(c,d)=m$ or $\eta_2(c,d)=m$. In fact
\begin{eqnarray*}
\eta_i (c,d)=\frac{1}{x_i}\left(\frac{1+c d
x^2_i}{d+cx^2_i}\right)^{k/2}.
\end{eqnarray*}
where $x_1,x_2$ are the solutions of equation $c^2 d
x^4-c\left(d^2k-1-d^2-k \right) x^2+d=0.$
\end{prop}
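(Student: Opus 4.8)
The plan is to recast equation \eqref{dynamiceven-1} together with the line $y=mx$ as the problem of counting the positive roots of $f(x)=mx$, equivalently the preimages under the single auxiliary function
$g(x):=f(x)/x=\frac{1}{x}\left(\frac{1+cdx^{2}}{d+cx^{2}}\right)^{k/2}$ of the value $m$. Since $f(0^{+})=d^{-k/2}>0$ and $f(x)\to d^{k/2}$ as $x\to\infty$, we have $g(x)\to+\infty$ as $x\to0^{+}$ and $g(x)\to0$ as $x\to\infty$; hence $g$ attains every $m>0$ at least once, and the whole question reduces to how the number of preimages $g^{-1}(m)$ changes with the shape of $g$.

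First I would settle the monotonicity of $f$ through its logarithmic derivative. A direct computation gives
\[
\frac{f'(x)}{f(x)}=\frac{kcx\,(d^{2}-1)}{(1+cdx^{2})(d+cx^{2})},
\]
whose sign is that of $d^{2}-1$, because the denominator is positive for $x>0$. Thus for $d<1$ the map $f$ is strictly decreasing, so $g=f/x$ is a strictly decreasing bijection of $(0,\infty)$ onto $(0,\infty)$ and $g(x)=m$ has exactly one solution. This proves the first assertion.

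For the multiplicity statement I would locate the critical points of $g$. From $g'(x)=\bigl(xf'(x)-f(x)\bigr)/x^{2}$, the condition $g'(x)=0$ reads $x\,(\ln f)'(x)=1$; inserting the expression above and clearing denominators yields precisely the quartic $c^{2}dx^{4}-c(d^{2}k-1-d^{2}-k)x^{2}+d=0$. Writing $y=x^{2}$ turns this into a quadratic in $y$ whose product of roots is $d/(c^{2}d)=1/c^{2}>0$ and whose sum of roots is $\bigl[(k-1)d^{2}-(k+1)\bigr]/(cd)$. The sum is positive exactly when $(k-1)d^{2}>k+1$, i.e. $d>\sqrt{(k+1)/(k-1)}$, so in that regime the two roots $0<x_{1}<x_{2}$ are the abscissae of a pair of local extrema of $g$, with critical values $\eta_{i}(c,d)=g(x_{i})=\frac{1}{x_{i}}\bigl(\frac{1+cdx_{i}^{2}}{d+cx_{i}^{2}}\bigr)^{k/2}$, matching the stated formula. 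Since $g$ descends from $+\infty$, the smaller critical point $x_{1}$ must be a local minimum and $x_{2}$ a local maximum, so $\eta_{1}<\eta_{2}$; tracing the horizontal line $y=m$ across the three monotone branches of $g$ then gives three solutions for $\eta_{1}<m<\eta_{2}$, a single tangency (two distinct solutions) when $m\in\{\eta_{1},\eta_{2}\}$, and one solution otherwise.

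The main obstacle is guaranteeing that the quadratic in $y=x^{2}$ genuinely has \emph{two positive real} roots. The sign condition $d>\sqrt{(k+1)/(k-1)}$ only makes the sum of roots positive; one must additionally verify that the discriminant $\bigl[(k+1)-(k-1)d^{2}\bigr]^{2}-4d^{2}$ is nonnegative, which is where the analysis is delicate, since for $d$ just above $\sqrt{(k+1)/(k-1)}$ the two critical points can still be complex. I would therefore factor this discriminant, identify the exact threshold at which the two extrema become real (it occurs at $d=(k+1)/(k-1)$), and confirm the min/max ordering so that the intersection count is exactly the one claimed; pinning down that window, and refining the stated threshold if necessary, is the crux of the argument, the remaining boundary and monotonicity bookkeeping being routine.
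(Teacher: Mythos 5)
Your proposal is correct, and at its core it is the same tangency argument as the paper's: the paper likewise reduces the multiplicity question to the equation $xf'(x)=f(x)$, which is exactly your condition $g'(x)=0$ for $g(x)=f(x)/x$, and it arrives at the same quartic $c^2dx^4-c\left((k-1)d^2-(k+1)\right)x^2+d=0$; your $d<1$ case also coincides with the paper's monotonicity argument via the sign of $f'$. The difference is how far the analysis is carried. The paper computes $f''$, locates the unique positive inflection point of $f$ (so that any line meets the graph at most three times), and then closes with ``a little elementary analysis''; it never checks when the quartic actually has real positive roots. Your sum/product/discriminant analysis of the quadratic in $y=x^2$ is precisely that missing step, and it shows the hypothesis of the Proposition is too weak as stated: $d>\sqrt{(k+1)/(k-1)}$ only makes the root sum $\left((k-1)d^2-(k+1)\right)/(cd)$ positive, whereas reality of the roots requires $\left[(k-1)d^2-(k+1)\right]^2\geq 4d^2$, which for $d>1$ factors as $(k-1)^2(d-1)(d+1)\left(d-\tfrac{k+1}{k-1}\right)\left(d+\tfrac{k+1}{k-1}\right)\geq 0$, i.e. $d\geq\tfrac{k+1}{k-1}$ --- exactly the threshold the paper states in the odd case (Proposition \ref{proposition-odd}). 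Concretely, for $k=2$, $c=1$, $d=2\in(\sqrt{3},3)$ the quadratic $2y^2-y+2$ has negative discriminant, so $g$ is strictly decreasing and $f(x)=mx$ has exactly one solution for every $m>0$, contradicting the claimed existence of three solutions; so your instinct to ``refine the stated threshold'' is right, and the even-case bound $\sqrt{(k+1)/(k-1)}$ should be $(k+1)/(k-1)$. Your remaining bookkeeping is sound and even renders the paper's convexity computation unnecessary: the quadratic in $y$ has positive root product, so $g$ has at most two positive critical points, the first necessarily a local minimum (since $g\to+\infty$ as $x\to 0^{+}$) and the second a local maximum, giving $\eta_1<\eta_2$ and the counts $3$, $2$, $1$ across the three monotone branches.
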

\begin{proof} Let us consider the equation \eqref{dynamiceven-1}.
Taking the first and the second derivatives of the function $f$,
then we have
$$
f'(x)=\frac{c (d^2-1)k x \left(1+c d
x^2\right)^{-1+k/2}}{\left(d+c x^2\right)^{1+k/2} }
$$
and
$$
f''(x)=-\frac{c(d^2-1)k\left(3 c^2 d x^4+c\left(1+d^2+ k- d^2
k\right) x^2-d\right)}{\left(d+c x^2\right)^{2+\frac{k}{2}}
\left(1+c d x^2\right)^{-\frac{k}{2}+2}}.
$$
If $d< 1$, i.e. $J_p<0$, then $f$ is decreasing and the equation
\eqref{dynamiceven-1} has only a unique solution; thus we can
restrict ourselves to the case $d>1$.

Let us consider equation
\begin{equation}\label{Eq12}
3 c^2 d x^4+c\left(1+d^2+ k- d^2 k\right) x^2-d=0.
\end{equation}

It is clear that $3 c^2 d x^4+c\left(1+d^2+ k- d^2 k\right) x^2-d$
is even function. Solving such an equation w.r.t. $x$, we can find
a positive root
$$
x^*=\frac{\sqrt{-1-d^2-k+d^2 k+\sqrt{12 d^2+\left(1+d^2+k-d^2
k\right)^2}}}{\sqrt{6cd}}.
$$
$x^*$ is a unique positive root of the quartic equation
\eqref{Eq12}. Therefore, the function $f$ is convex up, if
$$x< \frac{\sqrt{-1-d^2-k+d^2 k+\sqrt{12 d^2+\left(1+d^2+k-d^2
k\right)^2}}}{\sqrt{6cd}}.
$$
The function $f$ is convex down, for
$$x>\frac{\sqrt{-1-d^2-k+d^2 k+\sqrt{12 d^2+\left(1+d^2+k-d^2
k\right)^2}}}{\sqrt{6cd}}.
$$
It is quite easy to see that three is more than one solution if
and only if there is more than one solution to $xf'(x)=f(x)$,
which is the same as
$$
c^2 d x^4-c\left(-1-d^2-k+d^2 k\right) x^2+d=0
$$
with the help of a little elementary analysis the proof is readily
completed.
\end{proof}
\begin{rem}
It is clear that the function \eqref{dynamiceven-1} has a unique
inflection point $x^{*}$ in the region $(0,\infty)$, therefore the
function \eqref{dynamiceven-1} has at most three fixed points in
the region $(0,\infty)$. We can conclude that the increase of $k$
affects the number of fixed points by no more than 3. So, we can
obtain at most 3 TIGMs associated to the model \eqref{hm} for
$(v_1,v_{k+1},v_{k+2},v_{2(k+1)})\in A$ in \eqref{invariantA}.
\end{rem}

\subsection{Numerical Example: Even Case}
Previously documented analysis can analytically solve these
equations for some given values $J,J_p, T$ and $k$, which we will
not show all of solutions here due to the complicated nature of
formulas and coefficients \cite{Wolfram}. In order to describe the
number of the fixed points of the function \eqref{dynamiceven-1},
we have manipulated the function \eqref{dynamiceven-1} and the
linear function $y=x$ via Mathematica \cite{Wolfram}.
We have obtained at most 3 positive real roots for some parameters
$J$ and $J_p$ (coupling constants), temperature $T$ and even
positive integer $k$.

Let us give an illustrative example.  Figs. \ref{fig1Cregion}
(a)-(b) show that there are 3 positive fixed points of the
function \eqref{dynamiceven-1}, if we take $J=-5.8,
J_p=3.25,T=14.358$ and $k=12,10$. Therefore, the phase transition
for the model \eqref{hm} occur.
\begin{figure} [!htbp]\label{fig1Cregion}
\centering
\includegraphics[width=65mm]{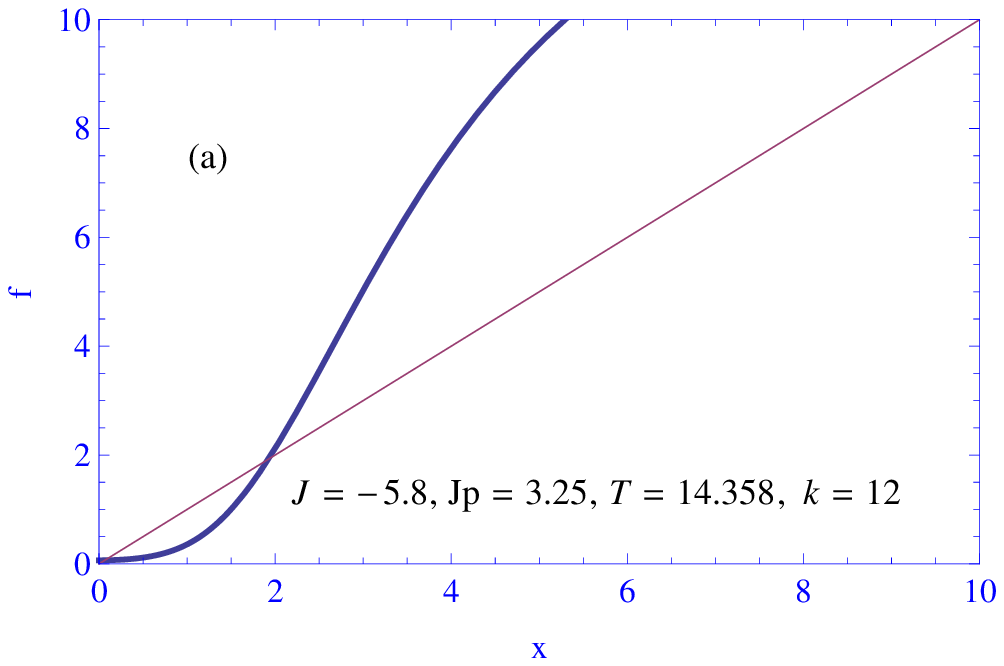}\ \ \label{fig1c}
\includegraphics[width=65mm]{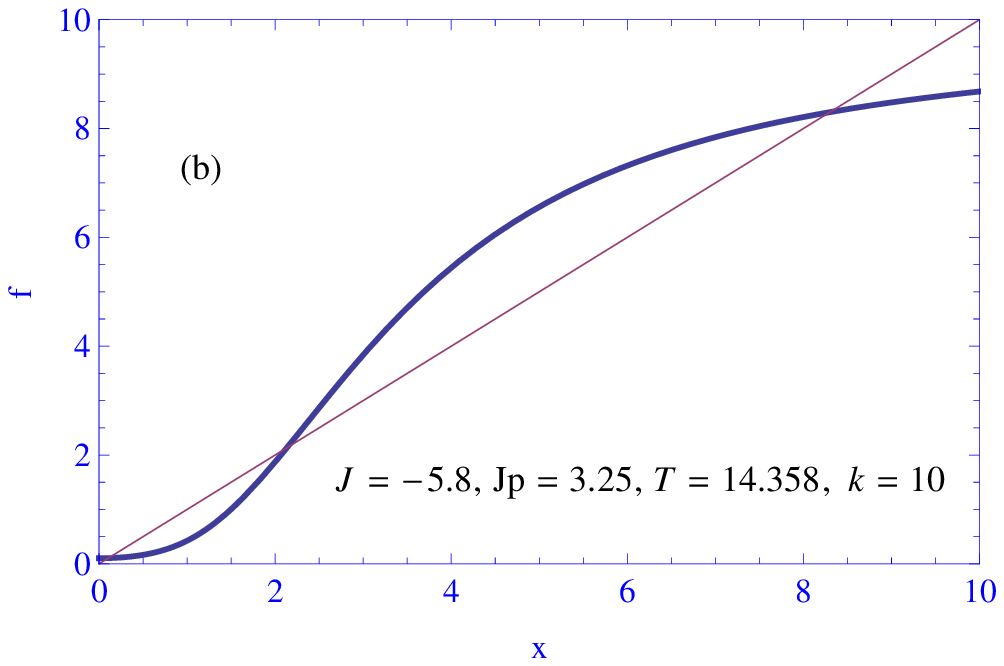}\ \
\includegraphics[width=65mm]{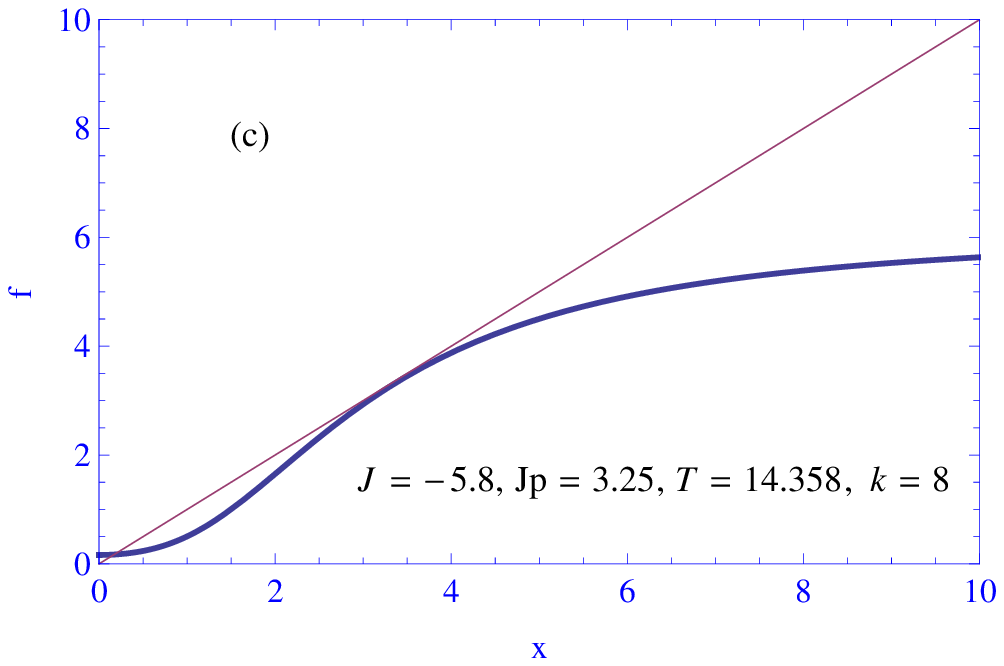}\ \
\includegraphics[width=65mm]{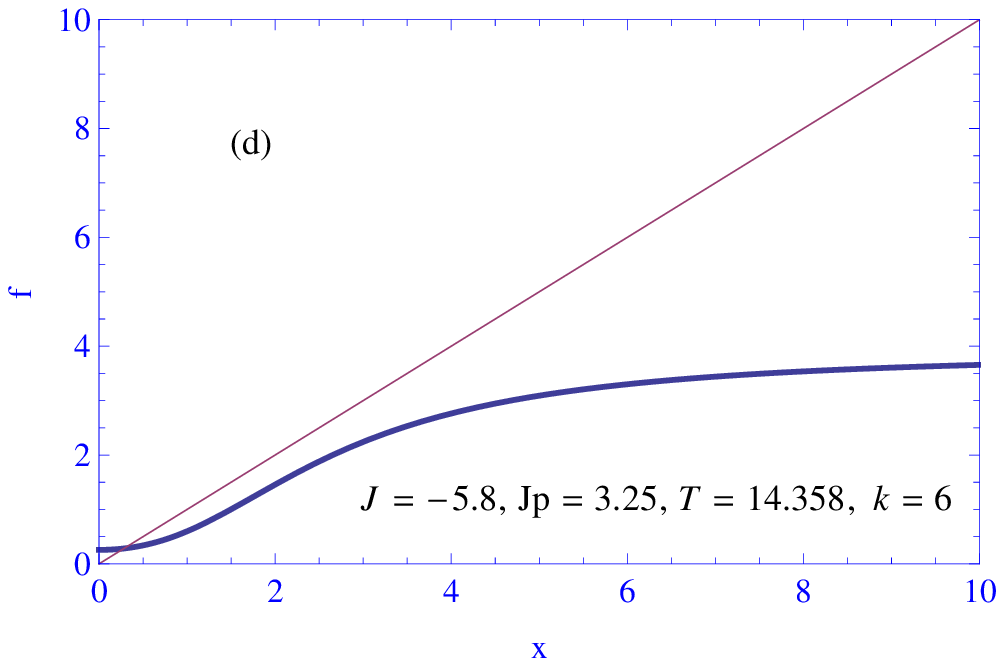}
\caption{Graphs of the function $f$ defined in
\eqref{dynamiceven-1} for $J=-5.8, J_p=3.25,T=14.358$ and even
integers $k=12,10,8,6$.}\label{fig1Cregion}
\end{figure}
In Figure \ref{fig1Cregion} (c), there exists two positive fixed
point of the function \eqref{dynamiceven-1} for $J=-5.8,
J_p=3.25,T=14.358$ and $k=8$.

In Figure \ref{fig1Cregion} (d), we are also able to find that,
for the parameters $J=-5.8, J_p=3.25,T=14.358$ and $k=6$, the
function \eqref{dynamiceven-1} has a unique positive fixed point.
Therefore,  the phase transition does not occur for $J=-5.8,
J_p=3.25,T=14.358$ and $k=6$.

We note that for $J=-5.8, J_p=3.25,T=14.358$ and $k=10$, the
function \eqref{dynamiceven-1} have three positive fixed points as
$x_1^{*}=0.106457, x_2^{*}=2.13383, x_3^{*}=8.30085.$ Figure
\ref{fig1Cregion} (b) shows that for all $x\in (x_2^{*},x_3^{*})$,
$\lim\limits_{n\rightarrow \infty}f^n(x)=x_3^{*}.$ Similarly, for
all $x\in (x_1^{*},x_2^{*})$, $\lim\limits_{n\rightarrow
\infty}f^n(x)=x_1^{*}.$ Therefore, the fixed points $x_1^{*}$ and
$x_3^{*}$ are stable and $x_2^{*}$ is unstable.

Therefore, there is a critical temperature $T_c > 0$ such that for
$T < T_c$ this system of equations has 3 positive solutions:
$h_1^{*}; h_2^{*};h_3^{*}.$  We denote the Gibbs measure that
corresponds to the root $h_1^{*}$ (and respectively
$h_2^{*};h_3^{*}$) by $\mu^{(1)}$ (and respectively
$\mu^{(2)}$,$\mu^{(3)}$).

\begin{rem}\label{exrema1}
We can conclude that the Gibbs measures  $\mu_1^{*}$ and
$\mu_3^{*}$ corresponding to the stable fixed points $x_1^{*}$ and
$x_3^{*}$ are extreme Gibbs distributions (for details
\cite{Iof,NHSS}).
\end{rem}
\begin{rem}
For $J=-5.8, J_p=3.25,T=14.358$ and $k>6$ ($k$ is even
integer), the model has phase transition. For $J=-5.8,
J_p=3.25,T=14.358$ and $k=6$, the phase transition of the model
does not occur.
\end{rem}

\section{The Recurrence equations for $k$-odd.}\label{odd}
Let us derive the recurrence equations to describe the existence
of the translation-invariant Gibbs measures (TIGMs) associated to
the model \eqref{hm} on the Cayley tree of order $k$-odd.

From the equations \eqref{Gibbs1}, \eqref{partition1} and
\eqref{mu2}, one get the following equations:
\begin{eqnarray}\label{odd1a}
u_{1}^{'}&=&\exp (h_{B_{1}(x^{(0)});S_{0}^{k}(+)})=L_{2}\left(\sum
_{i=0}^k (a b)^{-2 i+k} \left(
\begin{array}{c}
 k \\
 i
\end{array}
\right)u_{1+i}^{(-1)^i}\right)^{k}\\
(u_{k+1}^{'})^{-1}&=&\exp
(-h_{B_{1}(x^{(0)});S_{k}^{0}(+)})={{L}_{2}}{ {\left(
\sum\limits_{i=0}^{k}{\left(
\begin{array}{c}
 k \\
 i
\end{array}
\right){{\left( \frac{b}{a}
\right)}^{-2i+k}}}u_{2+i+k}^{{{(-1)}^{1+i}}}
\right)}^{k}}.\label{odd1b}
\end{eqnarray}
For the configuration $S_{0}^{k}(\sigma ({{x}^{(0)}})=-)=\left(
\begin{matrix}
   \overbrace{+,\cdots ,+,+}^{k-odd}  \\
   -  \\
\end{matrix} \right)$, similarly to \eqref{odd1a} and \eqref{odd1b} we obtain
\begin{eqnarray}\label{odd1c}
{{\left( u_{k+2}^{'} \right)}^{-1}}=\exp
(-{{h}_{{{B}_{1}}({{x}^{(0)}});S_{0}^{k}(-)}})={{L}_{2}}{{\left(
\sum\limits_{i=0}^{k}{\left(
\begin{array}{c}
 k \\
 i
\end{array}
\right){{\left( \frac{a}{b}
\right)}^{-2i+k}}}u_{1+i}^{{{(-1)}^{i}}} \right)}^{k}}.
\end{eqnarray}
Lastly, for the configuration $S_{k}^{0}(\sigma
({{x}^{(0)}})=-)=\left(
\begin{matrix}
   \overbrace{-,\cdots ,-,- }^{k-odd} \\
   -  \\
\end{matrix} \right)$  we have
\begin{eqnarray}\label{odd1d}
\left( u_{2(k+1)}^{'} \right)=\exp
({{h}_{{{B}_{1}}({{x}^{(0)}});S_{k}^{0}(-)}})={{L}_{2}}{{\left(
\sum\limits_{i=0}^{k}{\left(
\begin{array}{c}
 k \\
 i
\end{array}
\right){{(a b)}^{2i-k}}}u_{2+i+k}^{{{(-1)}^{1+i}}} \right)}^{k}}.
\end{eqnarray}
From the equations \eqref{odd1a}-\eqref{odd1d},  it is obvious
that
\begin{eqnarray}\label{even-eq4}
e^{(-1)^{m}h_{B_{1}(x^{(0)});S_{m}^{k-m}(+)}}&=&\left(e^{h_{B_{1}(x^{(0)});S_{0}^{k}(+)}}
\right)^{\frac{k-m}{k}}\left(e^{-h_{B_{1}(x^{(0)});S_{k}^{0}(+)}}
\right)^{\frac{m}{k}}\\\label{even-eq5}
e^{(-1)^{m+1}h_{B_{1}(x^{(0)});S_{m}^{k-m}(-)}}&=&\left(e^{
-h_{B_{1}(x^{(0)});S_{0}^{k}(-)}} \right)^{\frac{k-m}{k}}\left(
e^{h_{B_{1}(x^{(0)});S_{k}^{0}(-)}}\right)^{\frac{m}{k}}.
\end{eqnarray}
By substituting  variables $u_i=v_i^{k}$ for $i=1,2,\cdots,2(k+1)$
in the recurrent equations \eqref{odd1a}-\eqref{odd1d}, after
small calculations, we can express a new recurrence system in a
simpler form:
\begin{eqnarray}\label{odd4}
(v'_{1})&=&\sqrt[k]{L_2}\left(\frac{1+(a
b)^{2}v_{1}v_{k+1}}{abv_{k+1}}\right)^k,\\\label{odd4a}
(v'_{k+1})^{-1}&=&\sqrt[k]{L_2}\left(\frac{b^{2}+a^{2}v_{k+2}v_{2(k+1)}}{abv_{k+2}}\right)^k,\\\label{odd4b}
(v'_{k+2})^{-1}&=&\sqrt[k]{L_2}\left(\frac{b^{2}+a^{2}v_{1}v_{k+1}}{abv_{k+1}}\right)^k,\\\label{odd4c}
(v'_{2(k+1)})&=&\sqrt[k]{L_2}\left(\frac{1+(a
b)^{2}v_{k+2}v_{2(k+1)}}{abv_{k+2}}\right)^k.\label{odd4d}
\end{eqnarray}

\subsection{The translation-invariant Gibbs measures: Odd
case}\label{TIGM-odd} In this subsection, we will identify the
solutions of the system of nonlinear equations
\eqref{odd4}-\eqref{odd4c} to describe the translation invariant
Gibbs measures associated to the model \eqref{hm} on the arbitrary
odd-order Cayley tree.

\begin{rem}\label{odd-consistency-rem}
By using the equations \eqref{even-eq4} and \eqref{even-eq5}, it
can be shown that if the vector-valued function $\textbf{h}(x)$
given in \eqref{consistency1} has the following form:
\begin{equation*}
\textbf{h}(x)=(p,\cdots,\frac{(-1)^m((k-m)p-mq)}{k},\cdots,
q,r,\cdots,\frac{(-1)^{m+1}(ms-(k-m)r)}{k},\cdots,s),
\end{equation*}
where $p,q,r,s\in \mathbb{R}$, then the \emph{consistency}
condition \eqref{comp} is satisfied.
\end{rem}
Now, we want to find the TIGMs for considered case. To do so, we
introduce some notations. Define transformation
\begin{eqnarray}\label{cavity}
\textbf{F}=(F_1, F_{k+1}, F_{k+2}, F_{2(k+1)}): \mathbf{R}^4_+
\rightarrow \mathbf{R}^4_+
\end{eqnarray}
such that
\begin{eqnarray*}
v'_1&=&F_1(v_1,v_{k+1},v_{k+2},v_{2(k+1)}),\\
v'_{k+1}&=&F_{k+1}(v_1,v_{k+1},v_{k+2},v_{2(k+1)}),\\
v'_{k+2}&=&F_{k+2}(v_1,v_{k+1},v_{k+2},v_{2(k+1)}),\\
v'_{2(k+1)}&=&F_{2(k+1)}(v_1,v_{k+1},v_{k+2},v_{2(k+1)}).
\end{eqnarray*}
The fixed points of the cavity equation
$\textbf{v}=\textbf{F}(\textbf{v})$ given in the Eq.
\eqref{cavity} describe the translation invariant Gibbs measures
associated to the model \eqref{hm}, where
$\textbf{v}=(v_1,v_{k+1},v_{k+2},v_{2(k+1)})$ and $k$ is any
positive odd integer greater than 1.

Divide \eqref{odd4} by \eqref{odd4a}, then we have
\begin{eqnarray}\label{odd4e}
v_{k+1}^{2k+1}v_{k+2}^{-k}=\left(\frac{1+(ab)^2v_{k+1}^{k+1}}{b^2+a^2v_{k+2}^{k+1}}\right)^k.
\end{eqnarray}
Similarly, divide \eqref{odd4c} by \eqref{odd4b}, then one gets
\begin{eqnarray}\label{odd4f}
v_{k+2}^{2k+1}v_{k+1}^{-k}=\left(\frac{1+(ab)^2v_{k+2}^{k+1}}{b^2+a^2v_{k+1}^{k+1}}\right)^k.
\end{eqnarray}
Multiply the equations \eqref{odd4e} and \eqref{odd4f}, we obtain
\begin{eqnarray*}\label{recurrence5}
v_{k+1}^{k+1}v_{k+2}^{k+1}=\left(\frac{1+(ab)^2v_{k+1}^{k+1}}
{b^2+a^2v_{k+2}^{k+1}}\right)^k\left(\frac{1+(ab)^2v_{k+2}^{k+1}}{b^2+a^2v_{k+1}^{k+1}}\right)^k.
\end{eqnarray*}
Let us consider set
\begin{eqnarray}\label{invariantB}
B=\left\{(v_1,v_{k+1},v_{k+2},v_{2(k+1)})\in
\mathbb{R}_+^4:v_{1}=v_{k+1}^{k}=v_{2({k+1})}=v_{{k+2}}^{k}\right\}.
\end{eqnarray}
Assume that $v_{k+1}^{k+1}=v_{k+2}^{k+1}=x$, and $a^2=c$, $b^2=d$
then we get
\begin{eqnarray}\label{recurrence5a}
x=\left(\frac{1+c d x}{d+c x}\right)^k=:g(x).
\end{eqnarray}
\begin{rem} If $(v_1,v_{k+1},v_{k+2},v_{2(k+1)})\in B$,
that is
$$
h_{B_1(x);S_{0}^{k}(+)}=kh_{B_1(x);S_{k}^{0}(+)}=kh_{B_1(x);S_{0}^{k}(-)}=h_{B_1(x);S_{k}^{0}(-)}
$$
then the equation \eqref{recurrence5a} is valid. Also, one can
verify that $\textbf{F}(B)\subset B.$ That is, the set $B$ is an
invariant set under the mapping $\textbf{F}$.
\end{rem}

Now we examine how many solutions the equation $g(x) = x$ has.
Thus, similarly to the Proposition \ref{proposition-even}, we have
the following Proposition. Here, by using the procedure given in
\cite[Proposition 10.7]{Preston} we will describe the number of
fixed points of the function $g$ in \eqref{recurrence5a}.
\begin{prop}\label{proposition-odd}
The equation \eqref{recurrence5a} (with $x \geq 0, c > 0, d > 0$)
has one solution if  $d<1$. If $d
> \frac{k+1}{k-1}$ then there exists $\eta_1(c,d)$, $\eta_2(c,d)$ with
$0<\eta_1(c,d)<\eta_2(c,d)$ such that equation
\eqref{dynamiceven-1} has 3 solutions if
$\eta_1(c,d)<m<\eta_2(c,d)$ and has 2 solutions if either
$\eta_1(c,d)=m$ or $\eta_2(c,d)=m$. In fact
\begin{eqnarray*}
\eta_i (c,d)=\frac{1}{x_i}\left(\frac{1+c d
x_i}{d+cx_i}\right)^{k}.
\end{eqnarray*}
where $x_1,x_2$ are the solutions of quadratic equation $c^2 d
x^2-c(d^2(k-1)-(1+ k))x+d=0.$
\end{prop}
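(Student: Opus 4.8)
The plan is to reduce the whole count to a monotonicity analysis of the single auxiliary function $\eta(x)=g(x)/x$, in exact parallel with the treatment of Proposition \ref{proposition-even}. Since the line $y=mx$ meets the graph of $g$ in \eqref{recurrence5a} precisely where $\eta(x)=m$, counting solutions of $g(x)=mx$ amounts to counting level sets of $\eta$. First I would record the two elementary limits $g(0)=d^{-k}$ and $g(x)\to d^{k}$ as $x\to\infty$, so that $\eta(0^{+})=+\infty$ and $\eta(\infty)=0$; every level $m>0$ is therefore attained at least once, and the entire question becomes how many critical points $\eta$ possesses.

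Next I would differentiate $g$. Writing $u=\frac{1+cdx}{d+cx}$ one gets $u'=\frac{c(d^{2}-1)}{(d+cx)^{2}}$ and hence $g'(x)=k\,u^{k-1}u'$, whose sign is that of $d^{2}-1$. Thus if $d<1$ the map $g$ is strictly decreasing and positive, so $\eta=g/x$ is strictly decreasing from $+\infty$ to $0$ and $\eta(x)=m$ has a unique root; this settles the case $d<1$ and leaves only $d>1$.

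For $d>1$ I would compute the numerator of $\eta'=\frac{xg'(x)-g(x)}{x^{2}}$ and factor it as
\[
xg'(x)-g(x)=-\frac{u^{k-1}}{(d+cx)^{2}}\,Q(x),\qquad
Q(x)=c^{2}dx^{2}-c\bigl(d^{2}(k-1)-(k+1)\bigr)x+d .
\]
This is the crux: it converts the transcendental equation $xg'(x)=g(x)$ into the quadratic $Q(x)=0$ displayed in the statement. Since $u^{k-1}/(d+cx)^{2}>0$, the sign of $\eta'$ is opposite to that of $Q$, so the critical points of $\eta$ are exactly the positive roots of $Q$. The product of the roots of $Q$ equals $1/c^{2}>0$, so two distinct positive roots occur iff the discriminant is positive and the sum is positive; the discriminant condition $\bigl(d^{2}(k-1)-(k+1)\bigr)^{2}>4d^{2}$ reduces, on the relevant branch, to $d^{2}(k-1)-2d-(k+1)>0$, i.e. $d>\frac{k+1}{k-1}$, which simultaneously forces the sum to be positive. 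Hence for $1<d\le\frac{k+1}{k-1}$ there is no pair of distinct positive critical points, $\eta$ is (weakly) monotone, and the solution is again unique.

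Finally, for $d>\frac{k+1}{k-1}$ let $0<x_{1}<x_{2}$ be the two roots of $Q$. Because $Q$ opens upward, $Q<0$ on $(x_{1},x_{2})$ and $Q>0$ outside, so $\eta$ decreases on $(0,x_{1})$, increases on $(x_{1},x_{2})$ and decreases on $(x_{2},\infty)$. Consequently $\eta(x_{1})=:\eta_{1}$ is a local minimum and $\eta(x_{2})=:\eta_{2}$ a local maximum with $0<\eta_{1}<\eta_{2}$, and by construction $\eta_{i}=\frac{1}{x_{i}}\bigl(\frac{1+cdx_{i}}{d+cx_{i}}\bigr)^{k}$, exactly as stated. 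A routine intermediate-value count on the three monotone branches, using $\eta(0^{+})=+\infty$ and $\eta(\infty)=0$, then yields three solutions of $\eta(x)=m$ when $\eta_{1}<m<\eta_{2}$, two when $m\in\{\eta_{1},\eta_{2}\}$ (a tangency at $x_{1}$ or $x_{2}$ together with one transversal crossing), and one otherwise. The single genuine obstacle is the algebraic factorization of $xg'(x)-g(x)$ and the sharp reduction of the discriminant inequality to the threshold $d>\frac{k+1}{k-1}$; once these are in hand the branch-counting is immediate and mirrors Proposition \ref{proposition-even}.
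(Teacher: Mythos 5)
Your proposal is correct and takes essentially the same route as the paper, which proves this proposition only by deferring to the procedure of Proposition \ref{proposition-even}; the core step there is exactly your reduction of the multiplicity question to the tangency condition $xg'(x)=g(x)$, i.e.\ to the quadratic $c^{2}dx^{2}-c\bigl(d^{2}(k-1)-(k+1)\bigr)x+d=0$. Your write-up in fact supplies the details the paper leaves implicit --- the factorization of $xg'(x)-g(x)$, the discriminant computation yielding the sharp threshold $d>\frac{k+1}{k-1}$, and the three-branch count for $\eta=g(x)/x$ --- and all of these check out.
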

The proof of Proposition \ref{proposition-odd} can be done easily
by following the procedure in Proposition \ref{proposition-even}.
\begin{figure} [!htbp]\label{fig2Cregion}
\centering
\includegraphics[width=65mm]{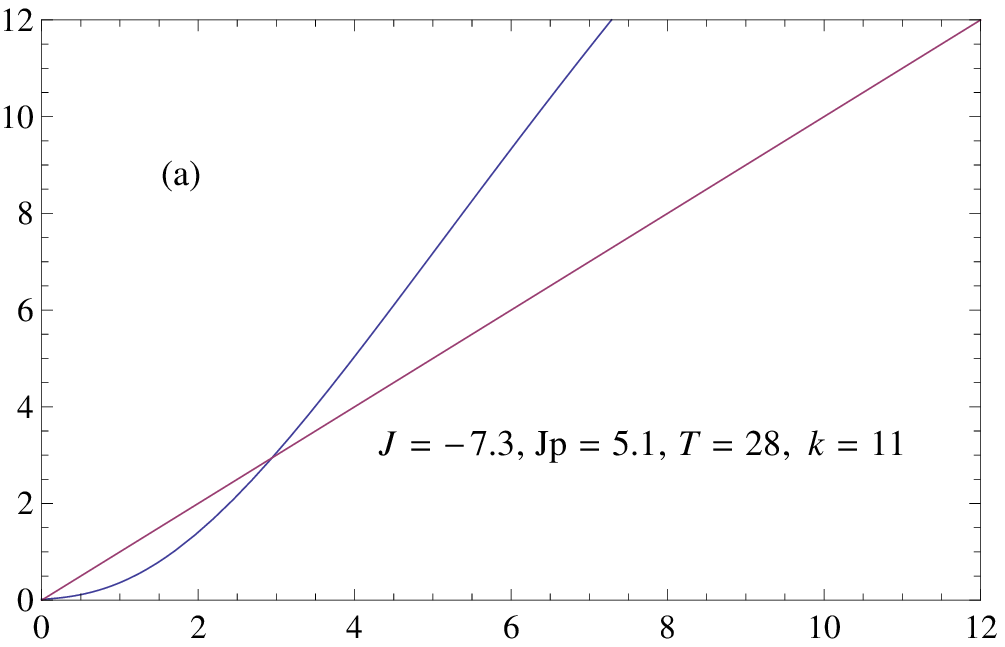}\ \ \label{fig1c}
\includegraphics[width=65mm]{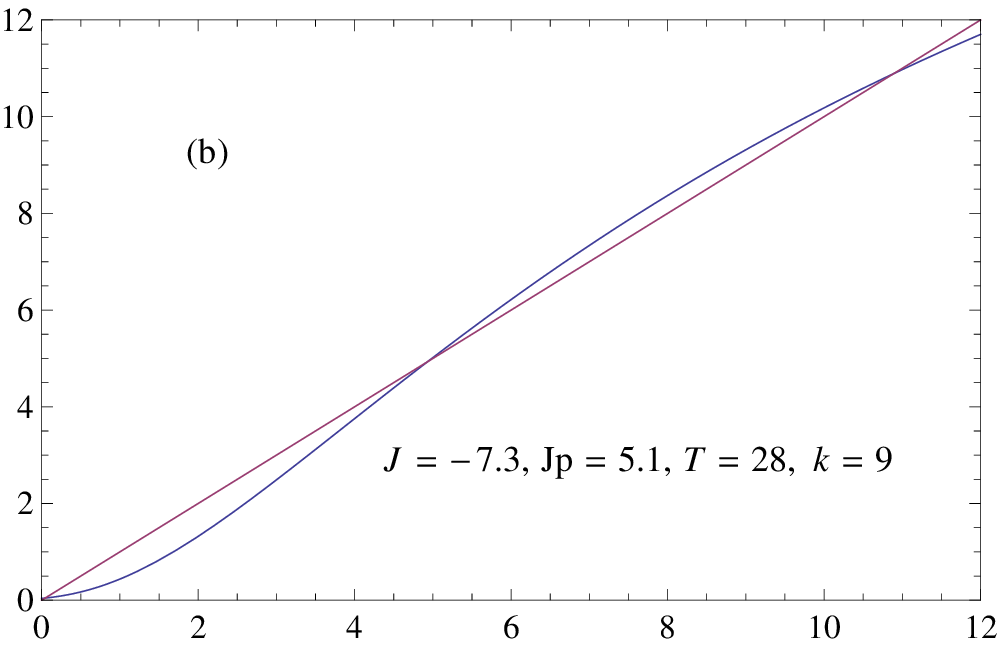}\ \
\includegraphics[width=65mm]{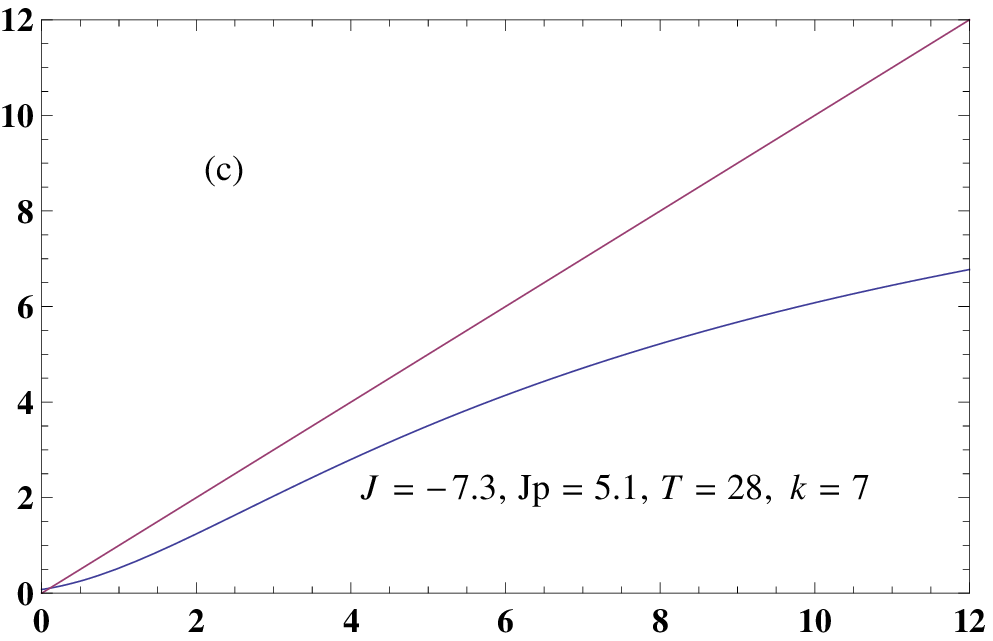}\ \
\caption{Graphs of the function $g$ given in \eqref{dynamiceven-1}
for $J = -7.3, Jp = 5.1, T = 28$ and odd integers $k=11,9,7$,
respectively.}\label{fig2Cregion}
\end{figure}
\subsection{Illustrative Example: Odd Case}
 We have manipulated the equation \eqref{recurrence5a} via Mathematica \cite{Wolfram}.
We have obtained at most 3 positive real roots for some parameters
$J$ and $J_p$ and temperature $T$. As an illustrative example, the
Figures \ref{fig2Cregion} (a)-(b) show that there are 3 positive
fixed points of the function \eqref{recurrence5a} for  $J = -7.3,
J_p = 5.1, T = 28$ and $k=11,9$ values. Therefore, we have
demonstrated the occurrence of phase transitions. The Figures
\ref{fig2Cregion} (a)-(b) shows that there are three positive
fixed points of the function $g$ for $J = -7.3, J_p = 5.1, T = 28$
and $k=11,9$.

In the Figure \ref{fig2Cregion} (c), there exists a unique
positive fixed point of the function \eqref{recurrence5a} for $J =
-7.3, J_p = 5.1, T = 28$ and $k=7$. Therefore,  the phase
transition does not occur for $J = -7.3, J_p = 5.1, T = 28$ and
$k=7$.

We can explicitly compute the fixed points of  the function
\eqref{recurrence5a} for given some parameters $J, J_p, T$ and
$k$. For example, for $J = -7.3, J_p = 5.1, T = 28$ and $k=9$, the
function \eqref{recurrence5a} have three positive fixed points as
$x_1^{*}=0.0448184, x_2^{*}=4.93008, x_3^{*}=10.8931.$ The Figure
\ref{fig2Cregion} (b) shows that for all $x\in (x_2^{*},x_3^{*})$,
$\lim\limits_{n\rightarrow \infty}g^n(x)=x_3^{*}.$ Similarly, for
all $x\in (x_1^{*},x_2^{*})$, $\lim\limits_{n\rightarrow
\infty}g^n(x)=x_1^{*}.$ Therefore, the fixed points $x_1^{*}$ and
$x_3^{*}$ are stable and $x_2^{*}$ is unstable.

\begin{rem}
As concluded in Remark \ref{exrema1}, we can see that the Gibbs
measures $\mu_1^{*}$ and $\mu_3^{*}$ corresponding to the stable
fixed points $x_1^{*}$ and $x_3^{*}$ are extreme Gibbs
distributions (for details \cite{Iof,NHSS}).
\end{rem}

\begin{rem} There is a critical temperature $T_c > 0$ such that for $T <
T_c$ the system of nonlinear equations \eqref{odd4}-\eqref{odd4c}
has 3 positive solutions: $h_1^{*}; h_2^{*};h_3^{*}.$  We denote
the Gibbs measure that corresponds to the root $h_1^{*}$ (and
respectively $h_2^{*};h_3^{*}$) by $\mu^{(1)}$ (and respectively
$\mu^{(2)}$,$\mu^{(3)}$).
\end{rem}

\section{Conclusions}\label{Conlussion}
In the present paper, we have proposed a rigorous
measure-theoretical approach to investigate the Gibbs measures
with memory of length 2 associated with the Ising-Vanniminus model
on the arbitrary order Cayley tree. We have generalized the
results conjectured in \cite{Akin2016,Akin2017} for an arbitrary
order Cayley tree. We have used the Markov random field method to
describe the Gibbs measures. We constructed the recurrence
equations corresponding generalized ANNNI model. We have satisfied
the Kolmogorov \emph{consistency} condition. We have explained
whether the number of  branches of tree does not change the number
of Gibbs measures. We have concluded that the order $k$ of the
tree significantly affects the occurrence of phase transition.
Also, we have seen that the role of $k$ is rather significant on
the number of Gibbs measures. Exact description of the solutions
of the system of recurrence equations
\eqref{recurrence1a}-\eqref{recurrence1d} (and
\eqref{odd4}-\eqref{odd4c}) is rather tricky. Therefore, we were
able to resolve only case \eqref{invariantA} (and
\eqref{invariantB}) for even $k$ (and odd $k$, respectively), the
other cases remain open problem. Also, depending on the even and
odd of $k$, the recurrence equations obtained for even branch
totaly differ from odd branch.

Note that for many problems the solution on a tree is much simpler
than on a regular lattice such as $d$-dimensional integer lattice
and is equivalent to the standard Bethe-Peierls theory
\cite{Katsura}. Although the Cayley tree is not a realistic
lattice; however, its amazing topology makes the exact
calculations of various quantities possible. Therefore, the
results obtained in our paper can inspire to study the Ising and
Potts models over multi-dimensional lattices or the grid
${{\mathbb{Z}}^{d}}$. After a glimpse of some applications, we
believe now that new theoretical developments can be inspired by
concrete problems. By considering the method used in this paper,
the investigation of Gibbs measures with memory of length $n>2$ on
arbitrary order Cayley tree and Cayley tree-like lattices
\cite{UA1,UA2,Moraal} is planned to be the subject of forthcoming
publications.


\end{document}